\providecommand{\abs}[1]{\lvert#1\rvert}
\newcommand{\ov}{\overline}
\newcommand{\cl}{{\cal L}}
\newcommand{\ra}{\rightarrow}
\newcommand{\dist}{\mbox{\rm dist\/}}
\newcommand{\eps}{\varepsilon}
\newtheorem{thm}{Theorem}
\newtheorem{lem}{Lemma}
\begin{document}

\begin{center}
{\bf\Large Number of lines in hypergraphs}\\
\vspace{0.5cm}
 Pierre Aboulker (Concordia University, Montreal)\footnote{{\tt pierreaboulker@gmail.com}}\\
 Adrian Bondy (Universit\' e Paris 6)\footnote{{\tt adrian.bondy@sfr.fr}}\\
 Xiaomin Chen (Shanghai Jianshi LTD)\footnote{{\tt gougle@gmail.com}}\\
 Ehsan Chiniforooshan (Google, Waterloo)\footnote{{\tt chiniforooshan@alumni.uwaterloo.ca}}\\
 Va\v sek Chv\' atal (Concordia University, Montreal)\footnote{{\tt chvatal@cse.concordia.ca}\\
\hphantom{lllllxxx}Canada Research Chair in Discrete Mathematics}\\
 Peihan Miao (Shanghai Jiao Tong University)\footnote{{\tt sandy656692@gmail.com}}\\
\end{center}

\begin{center}
{\bf Abstract}
\end{center}
{\small Chen and Chv\' atal introduced the notion of lines in
  hypergraphs; they proved that every $3$-uniform hypergraph with $n$
  vertices either has a line that consists of all $n$ vertices or else
  has at least $\log_2 n$ distinct lines. We improve this lower bound
  by a factor of $2-o(1)$.}
\vspace{0.5cm}

A classic theorem in plane geometry asserts that every noncollinear
set of $n$ points in the plane determines at least $n$ distinct lines.
As noted by Erd\H os~\cite{E43} in 1943, this is a corollary of the
Sylvester-Gallai theorem (which asserts that for every noncollinear
set $V$ of finitely many points in the plane, some line goes through
precisely two points of $V$); it is also a special case of a
combinatorial theorem proved by De Bruijn and Erd\H os~\cite{DE48} in
1948.  In 2006, Chen and Chv\' atal~\cite{CC} suggested that this
theorem might generalize to all metric spaces. More precisely, line
$\ov{uv}$ in a Euclidean space can be characterized as
\begin{multline*}
\ov{uv}=\{p:
\dist(p,u)\!+\!\dist(u,v)\!=\!\dist(p,v)\;\mbox{{\rm or}}\;\\
\dist(u,p)\!+\!\dist(p,v)\!=\!\dist(u,v)\;\mbox{{\rm or}}\;  
\dist(u,v)\!+\!\dist(v,p)\!=\!\dist(u,p)\}, 
\end{multline*}
where $\dist$ is the Euclidean metric; in an arbitrary metric space
$(V,\dist)$, the same relation may be taken for the definition of line
$\ov{uv}$.  With this definition of lines in metric spaces, Chen and
Chv\'{a}tal asked:
\begin{itemize}
\item[($\star$)]
{\em True or false? Every metric space on $n$ points, where $n\ge 2$,
  either has at least $n$ distinct lines or else has a line that
is universal in the sense of consisting of all $n$ points.}
\end{itemize}
There is some evidence that the answer to ($\star$) may be `true'. For
instance, Kantor and Patk\' os~\cite{KP} proved that
\begin{itemize} 
\item if no two of $n$ points ($n\ge 2$) in the plane share
  their $x$- or $y$-coordinate, then these $n$ points with the $L_1$
  metric either induce at least $n$ distinct lines or else they induce
  a universal line.
\end{itemize}
(For sets of $n$ points in the plane that are allowed to share their
coordinates, \cite{KP} provides a weaker conclusion: these $n$ points
with the $L_1$ metric either induce at least $n/37$ distinct lines or
else they induce a universal line.)  Chv\'{a}tal
\cite{Chv12} proved that
\begin{itemize}
\item every metric space on $n$ points where $n\ge 2$ and each nonzero
  distance equals $1$ or $2$ either has at least $n$ distinct lines or
  else has a universal line.
\end{itemize}
Every connected undirected graph induces a metric space on its vertex
set, where $\dist(u,v)$ is the familiar graph-theoretic distance
between vertices $u$ and $v$ (defined as the smallest number of edges
in a path from $u$ to $v$). It is easy to see that
\begin{itemize}
\item every metric space induced by a connected bipartite graph on $n$
  vertices, where $n\ge 2$, has a universal line.
\end{itemize} 
A {\em chordal graph\/} is a graph that contains no induced cycle of
length four or more.  Beaudou, Bondy, Chen, Chiniforooshan,
Chudnovsky, Chv\'{a}tal, Fraiman, and Zwols~\cite{BBC2} proved that
\begin{itemize}
\item every metric space induced by a connected chordal graph on $n$
  vertices, where $n\ge 2$, either has at least $n$ distinct lines or
  else has a universal line.
\end{itemize}
Chiniforooshan and Chv\'{a}tal \cite{CC11}
proved that
\begin{itemize}
\item every metric space induced by a connected graph on $n$
  vertices either has $\Omega(n^{2/7})$ distinct lines or else has a 
  universal line. 
\end{itemize}

A {\em hypergraph\/} is an ordered pair $(V,H)$ such that $V$ is a set
and $H$ is a family of subsets of $V$; elements of $V$ are the {\em
  vertices\/} of the hypergraph and members of $H$ are its {\em
  hyperedges;\/} a hypergraph is called {\em $k$-uniform\/} if each of
its hyperedges consists of $k$ vertices. The definition of lines in a
metric space $(V,\dist)$ that was our starting point depends only on
the $3$-uniform hypergraph $(V,H)$ where
$H=\{\{a,b,c\}:\,\dist(a,b)+\dist(b,c)=\dist(a,c)\}$: we have
\[
\overline{uv}\;=\;\{u,v\}\cup\{p:\{u,v,p\}\in H\}.
\]
Chen and Chv\'{a}tal~\cite{CC} proposed to take this relation for the
definition of line $\ov{uv}$ in an arbitrary $3$-uniform hypergraph
$(V,H)$. With this definition, the combinatorial theorem of De Bruijn
and Erd\H os~\cite{DE48} can be stated as follows:
\begin{itemize}
\item if no four vertices in a $3$-uniform hypergraph carry two or
  three hyperedges, then, except when one of the lines in this
  hypergraph is universal, the number of lines is at least the number
  of vertices and the two numbers are equal if and only if the
  hypergraph belongs to one of two simply described families.
\end{itemize}
Beaudou, Bondy, Chen, Chiniforooshan, Chudnovsky, Chv\'{a}tal,
Fraiman, and Zwols~\cite{BBC1} generalized this statement by allowing
any four vertices to carry three hyperedges:
\begin{itemize}
\item if no four vertices in a $3$-uniform hypergraph carry two
  hyperedges, then, except when one of the lines in this hypergraph is
  universal, the number of lines is at least the number of vertices
  and the two numbers are equal if and only if the hypergraph belongs
  to one of three simply described families.
\end{itemize}
In particular, if the `metric space' in ($\star$) is replaced by
`$3$-uniform hypergraph where no four vertices carry two hyperedges',
then the answer is `true'. Without the assumption that no four
vertices carry two hyperedges, the answer is `false'~\cite[Theorem
  3]{CC}: there are arbitrarily large $3$-uniform hypergraphs where no
line is universal and yet the number of lines is only
$\exp(O(\sqrt{\ln n}))$. Nevertheless, even this variation on
($\star$) can be answered `true' if the desired lower bound on the
number lines is weakened enough~\cite[Theorem 4]{CC}:
\begin{itemize}
\item Every $3$-uniform hypergraph with $n$ vertices either has at
  least $\lg n + \frac{1}{2}\lg\lg n + \frac{1}{2}\lg\frac{\pi}{2}-
  o(1)$ distinct lines or else has a universal line.
\end{itemize}
(We follow the convention of letting $\lg$ stand for the logarithm to
base $2$.) The purpose of our note is to improve this lower bound by a
factor of $2-o(1)$.

All our hypergraphs are $3$-uniform. We let $V$ denote the vertex set,
we let $\cl$ denote the line set, and we write $n=\abs{V}$,
$m=\abs{\cl}$.  The number of hyperedges, which we call {\em
  hedges,\/} is irrelevant to us.  We assume throughout that no line
is universal.

Let us define mappings $\alpha,\beta: V\ra 2^\cl$ by 
\[
\alpha(x)=\{L\in\cl :x\in L\} 
\;\;\text{ and }\;\; 
\beta(x)=\{\overline{xw}: w\ne x\}.
\]
Note that $\beta(x)\subseteq \alpha(x)$ for all $x$.  The proof of the
lower bound 
\begin{equation}\label{eq.orig}
m\ge \lg n
\end{equation} 
in \cite[Theorem 4]{CC} relies on the
observation that $\alpha$ is one-to-one. This observation generalizes
as follows:
\begin{lem}\label{lem.anti} If $f: V\ra 2^\cl$ is a mapping such that 
  $\beta(x)\subseteq f(x)\subseteq \alpha(x)$ for all $x$, then $f$ is
  one-to-one and $\{f(x): x\in V\}$ is an antichain.
\end{lem}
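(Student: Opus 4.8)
The plan is to prove the single statement that for every pair of distinct vertices $x,y\in V$ one has $f(x)\not\subseteq f(y)$. This suffices: it says that no member of $\{f(x):x\in V\}$ is contained in another, which is precisely the antichain property, and it also forces $f$ to be one-to-one, since $f(x)=f(y)$ would in particular give $f(x)\subseteq f(y)$. So the whole lemma reduces to ruling out a one-sided inclusion between images of distinct vertices.

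Accordingly, I would fix distinct $x,y\in V$ and suppose, for contradiction, that $f(x)\subseteq f(y)$. Combining this with the hypotheses $\beta(x)\subseteq f(x)$ and $f(y)\subseteq\alpha(y)$ yields $\beta(x)\subseteq\alpha(y)$; unravelling the definitions of $\beta$ and $\alpha$, this says that \emph{every} line $\overline{xw}$ with $w\ne x$ passes through $y$.

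Next I would show that the line $\overline{xy}$ is universal, which contradicts the standing assumption that no line is universal. Certainly $x,y\in\overline{xy}$. Let $p$ be any vertex other than $x$ and $y$. Since $p\ne x$, the line $\overline{xp}$ lies in $\beta(x)$, hence contains $y$; as $y\notin\{x,p\}$, the defining formula $\overline{xp}=\{x,p\}\cup\{q:\{x,p,q\}\in H\}$ forces $\{x,p,y\}\in H$. But $\{x,y,p\}\in H$ means precisely that $p\in\overline{xy}$. Thus $\overline{xy}=V$, a universal line, which is the desired contradiction. Hence $f(x)\not\subseteq f(y)$ for all distinct $x,y$, and the lemma follows.

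There is no real obstacle here once the right reduction is spotted; the only step that needs a moment's care is the last one, where the \emph{symmetry} of an (unordered) hyperedge $\{x,y,p\}$ is used to trade the assertion ``$y$ lies on $\overline{xp}$'' for ``$p$ lies on $\overline{xy}$''. Everything else is bookkeeping with the chain of inclusions $\beta(x)\subseteq f(x)\subseteq \alpha(x)$ together with the assumed $f(x)\subseteq f(y)$.
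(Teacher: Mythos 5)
Your proof is correct and is essentially the paper's argument read in contrapositive form: the paper reduces the lemma to showing $\beta(x)-\alpha(y)\ne\emptyset$ for $x\ne y$ and exhibits a witness $\overline{xz}$ with $z\notin\overline{xy}$, while you assume $\beta(x)\subseteq\alpha(y)$ and deduce that $\overline{xy}$ is universal --- the same use of non-universality and of the symmetry of the hyperedge $\{x,y,p\}$.
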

\begin{proof}
  We only need prove that $\beta(x)-\alpha(y)\ne \emptyset$ whenever
  $x\ne y$. To do this, we use the assumption that $\overline{xw}$ is
  not universal: there is a point $z$ such that
  $z\not\in\overline{xy}$. This means that $\{x,y,z\}$ is not a hedge,
  and so $\overline{xz}\in \beta(x)-\alpha(y)$.
\end{proof}
\begin{lem}\label{lem.new}
  If $x,y,z$ are vertices such that $\overline{xy} = \overline{xz}$,
  then $\alpha(y)\cap\beta(x)=\alpha(z)\cap\beta(x)$.
\end{lem}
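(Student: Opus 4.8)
The plan is to prove the two inclusions $\alpha(y)\cap\beta(x)\subseteq\alpha(z)\cap\beta(x)$ and $\alpha(z)\cap\beta(x)\subseteq\alpha(y)\cap\beta(x)$ separately. Since the hypothesis $\overline{xy}=\overline{xz}$ is symmetric in $y$ and $z$, it will suffice to establish the first inclusion; the second then follows by interchanging the roles of $y$ and $z$. I will also assume $x,y,z$ pairwise distinct, the remaining cases being trivial.

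To prove $\alpha(y)\cap\beta(x)\subseteq\alpha(z)\cap\beta(x)$, I would fix a line $L\in\alpha(y)\cap\beta(x)$ and show $z\in L$. Membership of $L$ in $\beta(x)$ means $L=\overline{xw}$ for some $w\ne x$, and membership in $\alpha(y)$ means $y\in L$. First I would clear away the degenerate possibilities: if $w=z$ then $z=w\in\overline{xw}=L$; if $w=y$ then $L=\overline{xy}=\overline{xz}$, which contains $z$. In the main case $w\notin\{x,y,z\}$. Then $y\in\overline{xw}$ together with $y\notin\{x,w\}$ forces $\{x,w,y\}$ to be a hedge, i.e.\ $w\in\overline{xy}$; applying the hypothesis gives $w\in\overline{xz}$, and since $w\notin\{x,z\}$ this says $\{x,z,w\}$ is a hedge, i.e.\ $z\in\overline{xw}=L$. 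In every case $z\in L$, so $L\in\alpha(z)$, and since already $L\in\beta(x)$ we conclude $L\in\alpha(z)\cap\beta(x)$, as desired.

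I do not anticipate a genuine obstacle here; the argument is a short case analysis. The one thing to be careful about is the bookkeeping of which of $x,y,z,w$ may coincide, so that the equivalence ``a point $p\notin\{u,v\}$ lies on $\overline{uv}$ iff $\{u,v,p\}$ is a hedge'' — which is just the definition of a line — is invoked only when it is legitimate. Once the degenerate sub-cases have been peeled off, the remaining deduction is a single pass through that equivalence, applied first to move from $y\in\overline{xw}$ to $w\in\overline{xy}$ and then, after rewriting $\overline{xy}$ as $\overline{xz}$, back from $w\in\overline{xz}$ to $z\in\overline{xw}$.
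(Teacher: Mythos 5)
Your proof is correct and follows essentially the same route as the paper's one-line argument: the core step is the chain $y\in\overline{xw}\Rightarrow\{x,w,y\}$ is a hedge $\Rightarrow w\in\overline{xy}=\overline{xz}\Rightarrow\{x,z,w\}$ is a hedge $\Rightarrow z\in\overline{xw}$. The only difference is that you explicitly peel off the degenerate coincidences among $x,y,z,w$, which the paper leaves implicit; this is a harmless (indeed slightly more careful) elaboration, not a different approach.
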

\begin{proof} 
  If $y\in \overline{xw}$, then $\{x,w,y\}$ is a hedge, and so $w\in
  \overline{xy}=\overline{xz}$, and so $\{x,z,w\}$ is a hedge, and so
  $z\in \overline{xw}$.
\end{proof} 
We define the {\em span} of a subset $S$ of $V$ to be $\cup_{x\in S}\beta(x)$.
\begin{lem}\label{lemma1} 
  If $n\ge 2$ and a nonempty set of $s$ vertices has a span of $t$ lines, then 
\[
m-t \ge \lg(n-s)-s\lg t.
\]
\end{lem}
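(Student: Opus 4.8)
The plan is to construct an explicit injection from $V\setminus S$ into a set of cardinality at most $t^s\cdot 2^{m-t}$; taking base-$2$ logarithms then produces the claimed inequality. Throughout, $S$ denotes the given nonempty set of $s$ vertices, and I write $T$ for its span $\bigcup_{x\in S}\beta(x)$, so that $\abs{T}=t$ and $\abs{\cl\setminus T}=m-t$.

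To each vertex $v\in V\setminus S$ I would attach the pair
\[
\phi(v)\;=\;\bigl(\,(\overline{xv})_{x\in S}\,,\ \alpha(v)\setminus T\,\bigr).
\]
Since $v\notin S$, each line $\overline{xv}$ with $x\in S$ is a member of $\beta(x)\subseteq T$, so the first coordinate ranges over a set of size at most $t^s$; the second coordinate is a subset of $\cl\setminus T$, hence ranges over a set of size at most $2^{m-t}$. Thus the codomain of $\phi$ has size at most $t^s\cdot 2^{m-t}$, and the whole argument comes down to showing $\phi$ is one-to-one.

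That injectivity is the step I expect to be the crux, and it is where Lemmas~\ref{lem.anti} and~\ref{lem.new} combine. Suppose $\phi(v)=\phi(v')$ with $v,v'\in V\setminus S$. Equality of first coordinates says $\overline{xv}=\overline{xv'}$ for every $x\in S$, so Lemma~\ref{lem.new} gives $\alpha(v)\cap\beta(x)=\alpha(v')\cap\beta(x)$ for each such $x$; taking the union over $x\in S$ yields $\alpha(v)\cap T=\alpha(v')\cap T$. Equality of second coordinates gives $\alpha(v)\setminus T=\alpha(v')\setminus T$, and the two together force $\alpha(v)=\alpha(v')$. Since $\alpha$ trivially satisfies $\beta(x)\subseteq\alpha(x)\subseteq\alpha(x)$, Lemma~\ref{lem.anti} applies and shows $\alpha$ is injective, whence $v=v'$.

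Finally, injectivity of $\phi$ gives $n-s\le t^s\cdot 2^{m-t}$, and taking logarithms yields $m-t\ge \lg(n-s)-s\lg t$. (If $s=n$ the statement is vacuous; and since $n\ge 2$ and $S\ne\emptyset$, we have $\beta(x)\ne\emptyset$ for $x\in S$, so $t\ge 1$ and $\lg t$ is meaningful throughout.) I do not anticipate any obstacle beyond correctly organizing the injectivity argument; the counting is immediate.
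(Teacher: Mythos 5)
Your proof is correct and is essentially the paper's own argument: the authors define $\psi(v)=(\overline{x_1v},\ldots,\overline{x_sv})$ and combine a pigeonhole bound on its fibers with the injectivity of $v\mapsto\alpha(v)-T$ on each fiber (via Lemmas~\ref{lem.new} and~\ref{lem.anti}), which is just your single injection $\phi$ unpacked into two steps. The counting, the use of both lemmas, and the resulting inequality $n-s\le t^s\cdot 2^{m-t}$ are identical.
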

\begin{proof}
  Given a nonempty set of $s$ vertices and its span $T$ of $t$ lines,
  enumerate the vertices in $S$ as $x_1,x_2,\ldots ,x_s$. 
Note that $t>0$ (since $n\ge 2$ and $s>0$) and define a mapping
$\psi:(V-S)\ra T^s$ by
\[
\psi(v)=(\overline{x_1v}, \overline{x_2v}, \ldots ,\overline{x_sv}).
\]
If $y,z$ are vertices in $V-S$ such that $\psi(y)=\psi(z)$, then
Lemma~\ref{lem.new} guarantees that
$\alpha(y)\cap\beta(x_i)=\alpha(z)\cap\beta(x_i)$ for every $x_i$ in
$S$ and so (since $T=\cup_{i=1}^s\beta(x_i)$) $\alpha(y)\cap
T=\alpha(z)\cap T$.  This and Lemma~\ref{lem.anti} (with $f=\alpha$)
together imply that $\alpha(y)-T\ne \alpha(z)-T$ whenever
$\psi(y)=\psi(z)$ and $y\ne z$. It follows that $\abs{C}\le 2^{m-t}$
for every subset $C$ of $V-S$ on which $\psi$ is constant.  Since at
least one of these sets $C$ has at least $(n-s)/t^s$ points, we 
conclude that $(n-s)/t^s\le 2^{m-t}$.
\end{proof}

\begin{lem}\label{lem.tail}
For every positive $\eps$, there is a positive $\delta$ such that
\[
\textstyle{ \sum_{i<\delta N}\binom{N}{i} \le 2^{\eps N} } \;\;
\text{  for all positive integers $N$.}
\]
\end{lem}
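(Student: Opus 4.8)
The plan is to prove this by an exponential-moment (Chernoff-style) estimate, which needs no probabilistic language at all. First I would dispose of a trivial range: if $\eps\ge 1$, then any $\delta\le 1$ already works, since $\sum_{i<\delta N}\binom{N}{i}\le\sum_{i=0}^{N}\binom{N}{i}=2^N\le 2^{\eps N}$. So from now on I may assume $0<\eps<1$.

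Next I introduce the auxiliary parameter $x=2^{\eps/2}-1$, noting that $0<x<1$ because $1<2^{\eps/2}<2$. The key observation is that for any $\delta>0$ and any integer $i$ with $0\le i<\delta N$ one has $x^i\ge x^{\delta N}$ (the base lies in $(0,1)$, so $t\mapsto x^t$ is decreasing), whence
\[
x^{\delta N}\sum_{i<\delta N}\binom{N}{i}\;\le\;\sum_{i<\delta N}\binom{N}{i}x^i\;\le\;\sum_{i=0}^{N}\binom{N}{i}x^i\;=\;(1+x)^N\;=\;2^{\eps N/2}.
\]
Rearranging gives $\sum_{i<\delta N}\binom{N}{i}\le 2^{\eps N/2}\,x^{-\delta N}=2^{(\eps/2+\delta\lg(1/x))N}$. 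Finally, since $\lg(1/x)>0$, I would simply choose $\delta=\eps/(2\lg(1/x))$, a positive constant independent of $N$; then $\delta\lg(1/x)=\eps/2$, the exponent collapses to $\eps N$, and the asserted inequality holds for every positive integer $N$.

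I do not expect a genuine obstacle here. The only points that call for a little care are the reduction to the case $\eps<1$ — which is what guarantees $x<1$, so that $\lg(1/x)$ is a bona fide positive quantity we are entitled to divide by — and the monotonicity direction $x^i\ge x^{\delta N}$ for $i<\delta N$, which is precisely where $0<x<1$ is used. (An alternative, slightly less self-contained route is to quote the standard entropy bound $\sum_{i\le\delta N}\binom{N}{i}\le 2^{h(\delta)N}$ valid for $0<\delta\le\tfrac12$, where $h$ is the binary entropy function, and then invoke $h(\delta)\to 0$ as $\delta\to 0^+$ to select $\delta$ with $h(\delta)\le\eps$; but the exponential-moment argument above is shorter and completely elementary.)
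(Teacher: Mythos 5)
Your proof is correct. The reduction to $0<\eps<1$ is legitimate, $x=2^{\eps/2}-1$ does lie in $(0,1)$ there, the chain $x^{\delta N}\sum_{i<\delta N}\binom{N}{i}\le\sum_{i=0}^{N}\binom{N}{i}x^i=(1+x)^N=2^{\eps N/2}$ is valid because every index in the truncated sum satisfies $x^i\ge x^{\delta N}$, and the choice $\delta=\eps/(2\lg(1/x))$ is a positive constant depending only on $\eps$ that makes the exponent collapse to $\eps N$ exactly.

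Your route is genuinely different from the paper's, though. The paper quotes a Bernstein--Hoeffding-type inequality, $\sum_{i=0}^{k}\binom{N}{i}\le(N/k)^k(N/(N-k))^{N-k}$ for $k\le\lfloor N/2\rfloor$, weakens it to $(eN/k)^k=(e/\delta)^{\delta N}$, and then gets $\delta$ non-constructively from the limit $(e/\delta)^{\delta}\to 1$ as $\delta\to 0^+$; this is essentially the entropy bound you mention in your parenthetical remark, imported as a black box. You instead run the exponential-moment argument from scratch: a single application of the binomial theorem, no citation, and an explicit formula for $\delta$ in terms of $\eps$. What the paper's version buys is brevity on the page (three displayed lines plus a reference); what yours buys is self-containedness and an effective constant, at the cost of the small case analysis on $\eps\ge 1$ and the bookkeeping around $\lg(1/x)$. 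Either is perfectly adequate for the application in Theorem~\ref{thm.lb}, where only the existence of some positive $\delta$ is used.
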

\begin{proof} A special case of an inequality proved first by Bernstein~\cite{Ber24,Hoe63} asserts that
\[
\sum_{i=0}^k\binom{N}{i} \le \left(\frac{N}{k}\right)^{k}\left(\frac{N}{N-k}\right)^{N-k}
\;\;\text{ for all $k=0,1,\ldots ,\lfloor N/2\rfloor$;}
\]
we have 
\[
\left(\frac{N}{k}\right)^{k}\left(\frac{N}{N-k}\right)^{N-k}
\;\le \; \left(\frac{eN}{k}\right)^{k}
\;\text{ and }\; \lim_{\delta\ra 0+}\left(\frac{e}{\delta}\right)^{\delta}=1.
\]
\end{proof}

\begin{thm}\label{thm.lb}
$m\ge (2-o(1))\lg n$.
\end{thm}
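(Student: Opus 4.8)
The plan is to derive Theorem~\ref{thm.lb} from Lemma~\ref{lemma1} and Lemma~\ref{lem.tail} by greedily producing a set of only a \emph{bounded} number of vertices whose span already consists of at least half of $\cl$. Fix a small $\eps>0$, let $\delta>0$ be the constant that Lemma~\ref{lem.tail} supplies for this $\eps$, and fix an integer $K$, depending only on $\delta$, with $(1-\delta)^K\le\tfrac12$. We may assume $m\le 2\lg n$, since otherwise $m\ge 2\lg n$ and we are done; consequently $\lg t\le\lg m\le 1+\lg\lg n=o(\lg n)$ for every $t\le m$, a fact used throughout to discard lower-order terms.

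Build vertices $x_1,x_2,\dots$ with their spans $T_k=\bigcup_{i\le k}\beta(x_i)$, starting from $T_0=\emptyset$: having reached $T_{k-1}$, stop and set $r=k-1$ if $k>K$, or if $\abs{T_{k-1}}\ge m/2$, or if no vertex $x$ has $\abs{\beta(x)\setminus T_{k-1}}\ge\delta(m-\abs{T_{k-1}})$; otherwise pick such an $x$ for $x_k$ and continue. Any step that is performed has $\beta(x_k)\setminus T_{k-1}\ne\emptyset$ while $\beta(x_i)\subseteq T_{k-1}$ for $i<k$, so the chosen vertices are distinct and $\{x_1,\dots,x_k\}$ has span $T_k$; moreover $m-\abs{T_k}\le(1-\delta)(m-\abs{T_{k-1}})$, whence $K$ performed steps would give $\abs{T_K}\ge m-(1-\delta)^K m\ge m/2$. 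Thus the construction performs at most $K$ steps and halts either (a)~with $1\le r\le K$ and $\abs{T_r}\ge m/2$, or (b)~with $\abs{T_r}<m/2$ and every vertex $x$ satisfying $\abs{\beta(x)\setminus T_r}<\delta(m-\abs{T_r})$.

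Write $t=\abs{T_r}$. In case~(a), Lemma~\ref{lemma1} applied to $x_1,\dots,x_r$ gives $m\ge t-r\lg t+\lg(n-r)\ge m/2-K\lg m+\lg(n-K)$, so $m\ge 2\lg(n-K)-2K\lg m$, which is $(2-o(1))\lg n$. In case~(b), use that $\beta$ is one-to-one (Lemma~\ref{lem.anti} with $f=\beta$): each of the $n$ sets $\beta(x)$ is determined by the pair $\bigl(\beta(x)\cap T_r,\ \beta(x)\setminus T_r\bigr)$, whose first coordinate is one of at most $2^{t}$ subsets of $T_r$ and whose second coordinate, of size below $\delta(m-t)$, is one of at most $\sum_{i<\delta(m-t)}\binom{m-t}{i}\le 2^{\eps(m-t)}$ subsets of $\cl\setminus T_r$ (Lemma~\ref{lem.tail} with $N=m-t$). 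Hence $n\le 2^{t+\eps(m-t)}$, so $\lg n\le(1-\eps)t+\eps m<\tfrac12(1+\eps)m$ since $t<m/2$, and therefore $m>\tfrac{2}{1+\eps}\lg n$. In both cases $m\ge(2-2\eps)\lg n$ once $n$ is large enough; as $\eps>0$ was arbitrary, $m\ge(2-o(1))\lg n$.

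The delicate point is precisely the requirement that a \emph{constant} number of steps suffice. Lemma~\ref{lemma1} applied to a single vertex $x$ only gives $m\ge\abs{\beta(x)}-\lg\abs{\beta(x)}+\lg(n-1)$, which attains the factor $2$ only when $\abs{\beta(x)}$ is roughly $m/2$, and no such vertex need exist; so the span must be enlarged, but it must be enlarged within boundedly many steps or else the term $r\lg t$ in Lemma~\ref{lemma1} ceases to be $o(\lg n)$. The geometric decay $m-\abs{T_k}\le(1-\delta)(m-\abs{T_{k-1}})$ is what makes this work: inside $K=K(\eps)$ steps either the span passes $m/2$, or the growth stalls, and in the stalled case the injectivity of $\beta$ combined with Lemma~\ref{lem.tail} pins down $n$ on its own. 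Verifying that all the leftover logarithmic terms are genuinely $o(\lg n)$ — the point of the reduction to $m\le 2\lg n$ — is the only thing that needs real care.
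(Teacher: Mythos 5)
Your proof is correct and follows essentially the same route as the paper's: a bounded set of vertices whose span $T$ either exceeds $m/2$ (handled by Lemma~\ref{lemma1}) or cannot be enlarged (handled by the injectivity of $\beta$, the decomposition of each $\beta(x)$ into its $T$-part and its small complement, and Lemma~\ref{lem.tail}), ending in the same count $n\le 2^{t+\eps(m-t)}$. The only difference is bookkeeping: you build the set greedily with the adaptive threshold $\delta(m-\abs{T_{k-1}})$ and an explicit cap $K$ on the number of steps, whereas the paper takes a largest set $S$ whose span has at least $(0.5\delta\lg n)\cdot\abs{S}$ lines and then needs the auxiliary estimate $m-t\ge(1-o(1))\lg n$ to turn the absolute threshold $0.5\delta\lg n$ into the relative one $\delta(m-t)$ that Lemma~\ref{lem.tail} requires.
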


\begin{proof}
Given any positive $\eps$, we will prove that 
$m\ge (2-4\eps)\lg n$ for all sufficiently large $n$. 
To do this, let $\delta$ be as in 
Lemma~\ref{lem.tail} and consider 
a largest set $S$ of vertices whose span $T$ has at least
$(0.5\delta \lg n)\cdot\abs{S}$ lines (this $S$ may be empty). Writing 
$s=\abs{S}$ and $t=\abs{T}$, we may assume that
\[
t<2\lg n
\]
(else we are done since $m\ge t$),  and so $s<4/\delta$. Now 
\[
m-t\ge (1-o(1))\lg n:
\]
this follows from Lemma~\ref{lemma1} when $t>0$ and from
\eqref{eq.orig} when $t=0$. In turn, we may assume that 
\[
t\le 0.5m
\]
(else $0.5m > m-t \ge (1-o(1))\lg n$ and we are done).  Finally,
consider a largest set $R$ of vertices such that $\beta(y)\cap
T=\beta(z)\cap T$ whenever $y,z\in R$ and note for future reference
that $\abs{R}\ge n/2^{\,t}$. Since $\beta$ is one-to-one
(Lemma~\ref{lem.anti}), all the sets $\beta(y)-T$ with $y\in R$ are
distinct; by maximality of $S$, each of them includes less than
$0.5\delta \lg n$ lines (else $y$ could be added to $S$); it follows
that (when $n$ is large enough to make $0.5\lg n$ less than $m-t$)
\[
\textstyle{ \abs{R}\le \sum_{i<0.5\delta \lg n}\binom{m-t}{i}} 
\le \sum_{i<\delta(m-t)}\binom{m-t}{i}
\le 2^{\,\eps (m-t)} \le 2^{\,\eps m},
\]
and so 
\[
n \le 2^{\,t}\abs{R} \le 2^{\,t+\eps m} \le 2^{\,(0.5+\eps) m} \le 2^{\,m/(2-4\eps)}.
\]
\end{proof}

{\bf\Large Acknowledgment}

\bigskip

\noindent The work whose results are reported here began at a workshop
held at Concordia University in April 2013.  We are grateful to the
Canada Research Chairs program for its generous support of this
workshop. We also thank Laurent Beaudou, Nicolas Fraiman, and Cathryn
Supko for their stimulating conversations during the workshop.

\end{document}